\numberwithin{equation}{section}
\newtheorem{theorem}{Theorem}[section]
\newtheorem{proposition}[theorem]{Proposition}
\newtheorem{lemma}[theorem]{Lemma}
\newtheorem{remark}[theorem]{Remark}
\newtheorem{definition}[theorem]{Definition}
\theoremstyle{definition}
\newtheorem*{theoBM}{Theorem}
\renewcommand{\epsilon}{\eps}
\renewcommand{\i}{{\rm i}}
\newcommand{\N}{{\mathbb N}}
\newcommand{\R}{{\mathbb R}}
\newcommand{\dvg}{{\rm div}}
\newcommand{\eps}{\varepsilon}
\newcommand{\iu}{{\rm i}}
\newcommand{\pnorm}[2][]{\if #1'' \left|#2\right|_p \else \left|#2\right|_{#1} \fi}
\newcommand{\scal}[2]{{#1} \cdot {#2}\,}
\newcommand{\loc}{{\rm loc}}
\newcommand{\C}{\mathbb{C}}
\renewcommand{\theta}{\vartheta}
\title[Maz'ya-Shaposhnikova limit in the magnetic setting]{The Maz'ya-Shaposhnikova limit in the magnetic setting}
\author[A.\ Pinamonti]{Andrea Pinamonti}
\author[M.\ Squassina]{Marco Squassina}
\author[E.\ Vecchi]{Eugenio Vecchi}
\address[A.\ Pinamonti]{Dipartimento di Matematica \newline\indent
	Universit\`a degli Studi di Padova,
	Via Trieste 63, 35121 Padova, Italy}
\email{Pinamont@math.unipd.it}
\address[M.\ Squassina]{Dipartimento di Matematica e Fisica \newline\indent
	Universit\`a Cattolica del Sacro Cuore,
	Via dei Musei 41, I-25121 Brescia, Italy}
\email{marco.squassina@dmf.unicatt.it}
\address[E.\ Vecchi]{Dipartimento di Matematica \newline\indent
	Universit\`a di Bologna,
	Piazza di Porta S. Donato 5, 40126, Bologna, Italy}
\email{eugenio.vecchi2@unibo.it}
\thanks{The authors are members of {\em Gruppo Nazionale per l'Analisi Ma\-te\-ma\-ti\-ca, la Probabilit\`a e le loro Applicazioni} (GNAMPA) of the {\em Istituto Nazionale di Alta Matematica} (INdAM). E.V. receives funding from the People Programme (Marie Curie Actions) of the European Union's Seventh
Framework Programme FP7/2007-2013/ under REA grant agreement No.\ 607643 (ERC Grant MaNET `Metric Analysis for Emergent Technologies')}
\subjclass[2010]{49A50, 26A33, 82D99}
\keywords{Fractional magnetic spaces, Maz'ya-Shaposhnikova limit, characterization of Sobolev spaces}
\begin{document}
\hyphenation{Spia-na-to}
\begin{abstract}
We prove a magnetic version of the Maz'ya-Shaposhnikova singular limit
of nonlocal norms with vanishing fractional parameter. This complements a general convergence 
result recently obtained by authors when the parameter approaches one.
\end{abstract}

\maketitle

%\begin{definition}
%Let $\Omega\subset\mathbb{R}^n$ be an open set and $(E,d)$ be a locally compact metric space. We define $BV(\Omega,E)$ as the class of Borel functions $u:\Omega\to E$ such that there exists a finite measure $\sigma$ satisfying the following condition
%\begin{equation}\label{defBV}
%|D\varphi(u)|(B)\leq \sigma(B)\quad \forall B\in\mathcal{B}(\Omega)
%\end{equation}
%for every $\varphi\in Lip(E, \mathbb{R})$ with Lipschitz constant less or equal than $1$. We also define the total variation measure $|Du|$ as the least measure satisfying \eqref{defBV}.
%\end{definition}

\section{Introduction}

\noindent
About fifteen years ago,  V.\ Maz'ya and T.\ Shaposhnikova proved that for any $n\geq 1$ and $p\in [1,\infty)$, 
\[
\lim_{s\searrow 0} s\int_{\R^n}\int_{\R^n}\frac{|u(x)-u(y)|^p}{|x-y|^{n+ps}}dxdy= 
\frac{4\pi^{n/2}}{p\Gamma(n/2)} \|u\|^p_{L^p(\R^n)},
\]
whenever $u\in D^{s,p}_{0}(\R^n)$ for some $s\in (0,1)$. Here $\Gamma$ denotes the Gamma function and 
the space $D^{s,p}_{0}(\R^n)$ is the completion of $C^{\infty}_c(\R^n)$ with respect to the Gagliardo norm
$$
\int_{\R^n}\int_{\R^n}\frac{|u(x)-u(y)|^p}{|x-y|^{n+ps}}dxdy.
$$
Their motivation was basically that of complementing a previous
result by Bourgain-Brezis-Mironescu \cite{bourg,bourg2} providing new characterizations for functions in the Sobolev space $W^{1,p}(\Omega)$.
%In a celebrated paper \cite{bourg} (see also \cite{bourg2, bre,mazia}), J.\ Bourgain, H.\ Brezis and P.\ Mironescu proved that 
%$\Omega\subset\R^n$ being a smooth bounded domain. 
Precisely,  if $\Omega\subset\R^n$ is a smooth bounded domain, then for any $W^{1,p}(\Omega)$ there holds 
% and $p>1$, then for every  $u\in W^{1,p}(\Omega)$, it holds
\begin{equation*}
\lim_{s\nearrow 1}(1-s)\int_{\Omega}\int_{\Omega}\frac{|u(x)-u(y)|^p}{|x-y|^{n+ps}}dxdy=
Q_{p,n}\int_{\Omega}|\nabla u|^pdx,
\end{equation*}
where $Q_{p,n}$ is defined by
\begin{equation}
\label{valoreK}
Q_{p,n}=\frac{1}{p}\int_{{\mathbb S}^{n-1}}|{\boldsymbol \omega}\cdot h|^{p}d\mathcal{H}^{n-1}(h),
\end{equation}
being ${\mathbb S}^{n-1}$ the unit sphere in $\R^n$
and ${\boldsymbol \omega}$ an arbitrary unit vector of $\R^n$. 
The above singular limits are natural and also admit a physical relevance
in the framework of the theory of Levy processes. Also, there is a
developed theory of fractional $s$-perimeters \cite{CRS}
and there have been several contributions concerning their
asymptotic analysis in the limits 
$s\nearrow 1$ and $s\searrow 0$ \cite{CV,AmbDepMart, Dip}.

One of the latest generalizations of this kind of convergence results appeared
recently in \cite{BM} in the context of {\em magnetic Sobolev spaces} $W_{A}^{1,2}(\Omega)$, see \cite{LL}.\ 
In fact, a relevant role in the study of particles which interact 
with a magnetic field $B=\nabla\times A$, $A:\R^n\to\R^n$,  is assumed by the {\em magnetic Laplacian} $(\nabla-\iu A)^2$ \cite{AHS,reed,LL}, yielding to  nonlinear Schr\"odinger equations of the type $- (\nabla-\iu A)^2 u + u = f(u),$
which have been extensively studied (see \cite{arioliSz} and the references therein). The operator is defined weakly as the differential of the energy
$$
W_{A}^{1,2}(\Omega)\ni u\mapsto \int_{\Omega}|\nabla u-\i A(x)u|^2dx.
$$
If $A:\R^n\to\R^n$ is a smooth field and $s \in (0,1)$,
a nonlocal magnetic counterpart of the magnetic laplacian,
\begin{equation*}
%\label{operator}
(-\Delta)^s_Au(x)=c(n,s) \lim_{\eps\searrow 0}\int_{B^c_\eps(x)}\frac{u(x)-e^{\i (x-y)\cdot A\left(\frac{x+y}{2}\right)}u(y)}{|x-y|^{n+2s}}dy,
\end{equation*}
where $c(n,s)$ is a normalization constant which behaves as follows
\begin{equation}
\label{asympt}
\lim_{s\searrow 0}\frac{c(n,s)}{s}=\frac{\Gamma(n/2)}{\pi^{n/2}},\qquad\,\, \lim_{s\nearrow 1}\frac{c(n,s)}{1-s}=\frac{2n\Gamma(n/2)}{\pi^{n/2}},
\end{equation}
was introduced  in \cite{piemar,I10} for complex-valued functions, with
motivations falling into
the framework of the general theory of L\'evy processes. Recently, the authors in \cite{nostro} (see \cite{BM} for $p=2$) proved that
if $A:\R^n\to \R^n$ is a $C^2$ vector field, then, for any $n\geq 1$, $p\in [1,\infty)$ and any 
	Lipschitz bounded domain $\Omega\subset\R^n$
	\begin{equation}
	\label{formula-s-1}
	\lim_{s\nearrow 1}(1-s)\int_{\Omega}\int_{\Omega}\frac{|u(x)-e^{\i (x-y)\cdot A\left(\frac{x+y}{2}\right)}u(y)|^p_p}{|x-y|^{n+ps}}dxdy=
	Q_{p,n}\int_{\Omega}|\nabla u-\i A(x)u|^p_p\, dx,
	\end{equation}
	for all $u\in W^{1,p}_A(\Omega)$, where $Q_{p,n}$ is as in \eqref{valoreK} and
	$|z|_p:=\left(|(\Re z_1,\ldots, \Re z_n)|^p+|(\Im z_1,\ldots, \Im z_n)|^p\right)^{1/p}.$ This has provided
	a new nonlocal characterization of the magnetic Sobolev spaces $W^{1,p}_A(\Omega).$
%
%\begin{lemma}
%	\label{emb}
%	Assume that $p\geq 1$ and $0<s<s'<1$. 
%	Then $W^{s',p}_A(\R^n)\hookrightarrow W^{s,p}_A(\R^n)$.
%	\end{lemma}
%\begin{proof}
%Follow \cite[Proposition 2.2]{Hitch}.
%	\end{proof}
\vskip3pt
\noindent
The main goal of this paper is to complete the picture of \cite{nostro} by providing a {\em magnetic counterpart} of 
the convergence result by {\em Maz'ya-Shaposhnikova} for vanishing fractional orders $s$, namely for $s\searrow 0$.
\vskip3pt
\noindent
We consider a locally bounded vector potential field $A:\R^n\to\R^n$ and the space of complex valued functions $D^{s,p}_{A,0}(\R^n,\C)$ defined as the completion of $C^{\infty}_c(\R^n,\C)$ with respect to the norm
$$
\|u\|_{D^{s,p}_{A,0}}=\left(\int_{\R^n}\int_{\R^n}\frac{|u(x)-e^{\i (x-y)\cdot A\left(\frac{x+y}{2}\right)}u(y)|^p}{|x-y|^{n+ps}}dxdy\right)^{1/p}.
$$

\noindent
By combining Lemma~\ref{liminf} and Lemma~\ref{limsup}, we shall prove the following result. 

\begin{theorem}[Magnetic Maz'ya-Shaposhnikova]
	\label{main}
	Let $n\geq 1$ and $p\in [1,\infty)$. Then for every 
	$$
	u\in \bigcup_{0<s<1}D^{s,p}_{A,0}(\R^n,\C),
	$$
	there holds
	\[
	\lim_{s\searrow 0} s\int_{\R^n}\int_{\R^n}\frac{|u(x)-e^{\i (x-y)\cdot A\left(\frac{x+y}{2}\right)}u(y)|^p}{|x-y|^{n+ps}}dxdy= 
	\frac{4\pi^{n/2}}{p\Gamma(n/2)} \|u\|^p_{L^p(\R^n)}.
	\]
\end{theorem}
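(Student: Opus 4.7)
The plan is to establish two matching asymptotic inequalities for the magnetic Gagliardo seminorm; combining Lemma~\ref{liminf} and Lemma~\ref{limsup} then yields the theorem. Each half is reduced to $u \in C_c^\infty(\R^n,\C)$ via a density argument, and the $C_c^\infty$ case is settled by a direct support-based decomposition of the integration domain.

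\textbf{The $C_c^\infty$ case.} For $u \in C_c^\infty(\R^n,\C)$ with $K=\supp u$, I would split $\R^n\times\R^n$ into the four regions $K\times K$, $K\times K^c$, $K^c\times K$ and $K^c\times K^c$. On $K^c\times K^c$ the integrand vanishes. On $K\times K^c$ one has $u(y)=0$, so the magnetic phase factor becomes irrelevant and the integrand collapses to $|u(x)|^p/|x-y|^{n+ps}$; the symmetric remark applies on $K^c\times K$. A polar-coordinates computation shows that for each $x$ in the interior of $K$,
\[
\lim_{s\searrow 0} s\int_{K^c}\frac{dy}{|x-y|^{n+ps}}= \frac{1}{p}\cdot\frac{2\pi^{n/2}}{\Gamma(n/2)},
\]
since the shell $d(x,K^c)\leq |x-y|\leq 1$ contributes a factor $d(x,K^c)^{-ps}-1\to 0$. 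Integrating against $|u(x)|^p$ and doubling for the symmetric region produces exactly the constant $\frac{4\pi^{n/2}}{p\Gamma(n/2)}\|u\|_{L^p(\R^n)}^p$. The remaining $K\times K$ piece is controlled by the elementary pointwise bound
\[
\big|u(x)-e^{\i (x-y)\cdot A(\tfrac{x+y}{2})}u(y)\big|\leq C_{K,A,u}\,|x-y|,
\]
valid because $u$ is smooth and $A$ is locally bounded on the convex hull of $K$; this makes the $K\times K$ integral $O(1)$ as $s\to 0^+$, so multiplication by $s$ kills it.

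\textbf{Density step.} For general $u\in D^{s_0,p}_{A,0}(\R^n,\C)$ one chooses $u_\varepsilon\in C_c^\infty(\R^n,\C)$ with $u_\varepsilon\to u$ in the $D^{s_0,p}_{A,0}$-norm (and in $L^p$). The key uniform bound, to be proved for every $w\in D^{s_0,p}_{A,0}$, is
\[
s\,\|w\|_{D^{s,p}_{A,0}}^p \leq s\,\|w\|_{D^{s_0,p}_{A,0}}^p + \frac{2^{p+1}\pi^{n/2}}{p\,\Gamma(n/2)}\|w\|_{L^p(\R^n)}^p \qquad (0<s\leq s_0),
\]
obtained by splitting the Gagliardo integral at $|x-y|=1$: for $|x-y|\leq 1$ one uses the monotonicity $|x-y|^{-n-ps}\leq |x-y|^{-n-ps_0}$, while for $|x-y|>1$ one combines $|v(x,y)|\leq |u(x)|+|u(y)|$ with the explicit evaluation $s\int_{|z|>1}|z|^{-n-ps}\,dz=\frac{1}{p}\cdot\frac{2\pi^{n/2}}{\Gamma(n/2)}$. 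With this bound in hand, the Minkowski-type triangle inequality applied to the $L^p$-quantity $\big(s\iint|v(x,y)|^p/|x-y|^{n+ps}\,dxdy\big)^{1/p}$ transfers the $C_c^\infty$-conclusion to $u$: after taking $\liminf$ (respectively $\limsup$) in $s$ and then sending $\varepsilon\to 0$, the approximation error is absorbed.

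\textbf{Main obstacle.} The delicate step is the density argument, both in producing the uniform splitting estimate above and in ensuring the $L^p$-control that allows one to let $\varepsilon\to 0$ uniformly in $s$. By contrast, the magnetic phase plays essentially no role in the limit, because on the dominant regions $K\times K^c$ and $K^c\times K$ it multiplies a vanishing factor, while on the small region $K\times K$ it is absorbed by the Lipschitz-type bound on $|u(x)-e^{\i(x-y)\cdot A(\tfrac{x+y}{2})}u(y)|$. Thus the magnetic version reduces, at the level of the leading asymptotics as $s\searrow 0$, to the scalar Maz'ya–Shaposhnikova mechanism applied region by region.
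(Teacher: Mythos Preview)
Your route is genuinely different from the paper's, and both the $C_c^\infty$ computation via the support decomposition and the splitting estimate
\[
s\,\|w\|_{D^{s,p}_{A,0}}^p \le s\,\|w\|_{D^{s_0,p}_{A,0}}^p + C\,\|w\|_{L^p}^p\qquad(0<s\le s_0)
\]
are correct. The paper never isolates a smooth case: for the liminf it applies the pointwise \emph{diamagnetic inequality} $\big||u(x)|-|u(y)|\big|\le\big|u(x)-e^{\i(x-y)\cdot A(\frac{x+y}{2})}u(y)\big|$ and quotes the classical Maz'ya--Shaposhnikova result for $|u|$; for the limsup it works directly with $u$, decomposing $\R^{2n}$ into $\{|y|\ge 2|x|\}$ and $\{|x|\le|y|\le 2|x|\}$ and invoking the magnetic Hardy inequality, with no approximation at all.

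There is, however, a gap in your density step. The space $D^{s_0,p}_{A,0}$ is a completion with respect to the Gagliardo seminorm only, so an element $u$ need not lie in $L^p$, and a defining sequence need not converge in $L^p$. For the limsup this is harmless (nothing to prove when $\|u\|_{L^p}=\infty$), but for the liminf you must show that $\liminf_{s\searrow 0}s[u]^p_{s,p,A}=\infty$ whenever $u\notin L^p$, and your uniform bound---being an \emph{upper} estimate---gives no information in that direction. Even when $u\in L^p$, the parenthetical claim that one may choose $u_\varepsilon\to u$ simultaneously in $D^{s_0,p}_{A,0}$ and in $L^p$ is not justified: the Hardy inequality only yields convergence in $L^p(|x|^{-s_0 p}dx)$, which controls no tail. (This second point is repairable: once $u\in L^p$ is known, take any $C_c^\infty$ sequence converging to $u$ in $L^p$; your argument actually only needs $\|u-u_\varepsilon\|_{D^{s_0,p}_{A,0}}<\infty$ for each fixed $\varepsilon$, not that it tends to zero.) The missing ingredient that closes the liminf at one stroke is precisely the diamagnetic inequality the paper uses: it reduces that half to the known scalar statement for $|u|$ and makes the density argument for the lower bound unnecessary.
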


\vskip2pt
\noindent
In particular, while the singular limit as $s\nearrow 1$ generates the magnetic gradient $\nabla -\i A$, the limit for vanishing 
$s$ tends to destroy the magnetic effects yielding the $L^p(\R^n)$-norm of the function $u$.
We point out that, while in \eqref{formula-s-1} the norm of complex numbers is $|\cdot|_p$, in Theorem~\ref{main} we use the usual
norm $|\cdot|=|\cdot|_2$. In any case when $A=0$ and $u$ is real-valued the formulas are all consistent with the classical statements.
In the case $p=2$, combining the asymptotic formulas in \eqref{asympt} with Theorem~\ref{main} implies that
$$
\frac{c(n,s)}{2}\int_{\R^n}\int_{\R^n}\frac{|u(x)-e^{\i (x-y)\cdot A\left(\frac{x+y}{2}\right)}u(y)|^2}{|x-y|^{n+2s}}dxdy\thickapprox \|u\|^2_{L^2(\R^n)},
\qquad\text{as $s\searrow 0$,}
$$
for any $u\in D^{s,2}_0(\R^n)$ for some $s\in (0,1)$. Although the magnetic setting is mainly meaningful
in the framework of nonlocal Schr\"odinger equations, we remark that
for $E\subset\R^n$, if $E^c:=\R^n\setminus E$, the quantity
\begin{equation*}
	P_s(E;A):=
	\frac{1}{2}\int_{E}\int_{E}\frac{|1-e^{\i (x-y)\cdot A\left(\frac{x+y}{2}\right)}|}{|x-y|^{n+s}}dxdy+
	\int_{E}\int_{E^c}\frac{1}{|x-y|^{n+s}}dxdy 
\end{equation*}
plays the role of a nonlocal $s$-perimeter of $E$ depending on $A$, which
reduces for $A=0$ to the usual notion of fractional $s$-perimeter of $E\subset \R^n$.
Then, if ${\mathscr L}^n(E)$ denotes the $n$-dimensional Lebesgue measure of $E\subset\R^n$, Theorem~\ref{main}, 
applied with $p=1$ and $u(x)={\bf 1}_E(x)$, reads as 
$$
\lim_{s\searrow 0} sP_s(E,A)=\frac{4\pi^{n/2}}{\Gamma(n/2)} {\mathscr L}^n(E),
$$
provided that $P_{s_0}(E,A)<+\infty,$ for some $s_0\in (0,1)$.

\section{Proof of the main result}
\noindent
The proof of Theorem~\ref{main} follows by combining Lemma~\ref{liminf} and Lemma~\ref{limsup} below.

\begin{lemma}[Liminf inequality]
	\label{liminf}
	Let $n\geq 1$, $p\in [1,\infty)$ and let
	$$
	u\in \bigcup_{0<s<1}D^{s,p}_{A,0}(\R^n,\C). 
	$$
	Then 
	\[
	\liminf_{s\searrow 0} s\int_{\R^n}\int_{\R^n}\frac{|u(x)-e^{\i (x-y)\cdot A\left(\frac{x+y}{2}\right)}u(y)|^p}{|x-y|^{n+ps}}dxdy\geq 
		\frac{4\pi^{n/2}}{p\Gamma(n/2)} \|u\|^p_{L^p(\R^n)}.
	\]
\end{lemma}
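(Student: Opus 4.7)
The plan is to reduce to the classical (non-magnetic) Maz'ya-Shaposhnikova theorem recalled in the introduction, via a pointwise diamagnetic inequality. Since $|e^{\i\theta}|=1$ for every real $\theta$, the reverse triangle inequality yields
\[
\bigl|u(x)-e^{\i(x-y)\cdot A\left(\frac{x+y}{2}\right)}u(y)\bigr|\,\geq\,\bigl||u(x)|-|u(y)|\bigr|
\]
for all $x,y\in\R^n$. Raising to the $p$-th power, dividing by $|x-y|^{n+ps}$, integrating over $\R^n\times\R^n$ and multiplying by $s$, this gives the comparison
\[
s\int_{\R^n}\int_{\R^n}\frac{|u(x)-e^{\i(x-y)\cdot A(\frac{x+y}{2})}u(y)|^p}{|x-y|^{n+ps}}\,dx\,dy\,\geq\,s\int_{\R^n}\int_{\R^n}\frac{\bigl||u(x)|-|u(y)|\bigr|^p}{|x-y|^{n+ps}}\,dx\,dy
\]
valid for every $s\in(0,1)$.

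The natural next step is to apply the classical Maz'ya-Shaposhnikova formula to the real-valued function $|u|$, obtaining
\[
\lim_{s\searrow 0}s\int_{\R^n}\int_{\R^n}\frac{\bigl||u(x)|-|u(y)|\bigr|^p}{|x-y|^{n+ps}}\,dx\,dy\,=\,\frac{4\pi^{n/2}}{p\Gamma(n/2)}\|u\|^p_{L^p(\R^n)},
\]
via the trivial identity $\||u|\|_{L^p(\R^n)}=\|u\|_{L^p(\R^n)}$. Combining this with the previous inequality yields the claimed liminf lower bound.

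The main obstacle is verifying that $|u|$ is admissible for the classical theorem, i.e.\ that $|u|\in D^{s_0,p}_{0}(\R^n)$ for some $s_0\in(0,1)$. Choosing $s_0$ with $u\in D^{s_0,p}_{A,0}(\R^n,\C)$ and an approximating sequence $(u_k)\subset C^\infty_c(\R^n,\C)$ with $u_k\to u$ in the magnetic Gagliardo norm, the diamagnetic inequality gives a uniform bound on the classical Gagliardo seminorms of $(|u_k|)$. The same bound applied to differences $u_k-u_m$ does not immediately yield Cauchyness of $(|u_k|)$ in the classical seminorm, so some care is needed: one can pass to a subsequence converging pointwise a.e., invoke Fatou's lemma to place $|u|$ in the Slobodeckij space of order $s_0$, and then approximate $|u|$ by real-valued $C^\infty_c(\R^n)$ functions by truncation and mollification. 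Once this step is settled, the classical formula closes the argument.
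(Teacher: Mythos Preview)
Your proposal is correct and follows exactly the paper's route: the diamagnetic inequality $||u(x)|-|u(y)||\le |u(x)-e^{\i(x-y)\cdot A(\frac{x+y}{2})}u(y)|$ reduces the liminf bound to the classical Maz'ya--Shaposhnikova formula applied to $|u|$. The paper simply asserts that $u\in D^{s_0,p}_{A,0}(\R^n,\C)$ implies $|u|\in D^{s_0,p}_0(\R^n)$ without further comment, whereas you flag this as the one nontrivial point and sketch a legitimate justification via Fatou and density; otherwise the arguments coincide.
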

\begin{proof}
	If 
	$$
		\liminf_{s\searrow 0} s\int_{\R^n}\int_{\R^n}\frac{|u(x)-e^{\i (x-y)\cdot A\left(\frac{x+y}{2}\right)}u(y)|^p}{|x-y|^{n+ps}}dxdy=\infty,
	$$
	the assertion follows. Otherwise, there exists a sequence $\{s_k\}_{k\in{\mathbb N}}\subset (0,1)$ with $s_k\searrow 0$ and 
	$$
	\liminf_{s\searrow 0} s\int_{\R^n}\int_{\R^n}\frac{|u(x)-e^{\i (x-y)\cdot A\left(\frac{x+y}{2}\right)}u(y)|^p}{|x-y|^{n+ps}}dxdy=
	\lim_{k\to\infty} s_k\int_{\R^n}\int_{\R^n}\frac{|u(x)-e^{\i (x-y)\cdot A\left(\frac{x+y}{2}\right)}u(y)|^p}{|x-y|^{n+ps_k}}dxdy,
	$$
	%The first part follows from Lemma~\ref{emb}. 
	the limit being finite. For a.e.\ $x,y\in \R^n$ we have the Diamagnetic inequality (cf.\ \cite[Remark 3.2]{piemar})
	\begin{equation}
	\label{diam}
	||u(x)|-|u(y)||\leq
	|u(x)-e^{\i (x-y)\cdot A\left(\frac{x+y}{2}\right)}u(y)|.
	\end{equation}
In particular, since $u\in D^{s_k,p}_{A,0}(\R^n,\C)$, 
we have $|u|\in D^{s_k,p}_0(\R^n)$ and, for any $k\geq 1$,
$$
s_k\int_{\R^n}\int_{\R^n}\frac{||u(x)|-|u(y)||^p}{|x-y|^{n+ps_k}}dxdy
\leq
s_k\int_{\R^n}\int_{\R^n}\frac{|u(x)-e^{\i (x-y)\cdot A\left(\frac{x+y}{2}\right)}u(y)|^p}{|x-y|^{n+ps_k}}dxdy.
$$
Taking the limit as $k\to\infty$ on both sides and invoking 
\cite[Theorem 3]{mazia} applied to $|u|,$ yields
$$
\frac{4\pi^{n/2}}{p\Gamma(\frac{n}{2})} \||u|\|^p_{L^p(\R^n)}\leq
	\lim_{k\to\infty}  s_k\int_{\R^n}\int_{\R^n}\frac{|u(x)-e^{\i (x-y)\cdot A\left(\frac{x+y}{2}\right)}u(y)|^p}{|x-y|^{n+ps_k}}dxdy,
$$
which concludes the proof.
\end{proof}

\begin{remark}[Magnetic Hardy inequality] \rm 
By combining the pointwise Diamagnetic inequality \eqref{diam} 
with the fractional Hardy inequality \cite{frank}, for $n>ps$ the following {\em magnetic Hardy 
inequality} holds: there exists a positive constant ${\mathcal H}_{n,s,p}$ such that
\begin{equation}
\label{hardym}
\int_{\R^n} \frac{|u(x)|^p}{|x|^{sp}}dx\leq {\mathcal H}_{n,s,p}
\int_{\R^n}\int_{\R^n}\frac{|u(x)-e^{\i (x-y)\cdot A\left(\frac{x+y}{2}\right)}u(y)|^p}{|x-y|^{n+ps}}dxdy,
\end{equation}
for every $u\in D^{s,p}_{A,0}(\R^n,\C)$. Similarly the following
{\em magnetic Sobolev inequality} holds: there exists a positive constant ${\mathcal S}_{n,s,p}$ such that
\begin{equation*}
	\Big(\int_{\R^n} |u(x)|^{\frac{np}{n-sp}}dx\Big)^{\frac{n-sp}{n}}\!\!\!\!\!\leq {\mathcal S}_{n,s,p}
	\int_{\R^n}\int_{\R^n}\frac{|u(x)-e^{\i (x-y)\cdot A\left(\frac{x+y}{2}\right)}u(y)|^p}{|x-y|^{n+ps}}dxdy,
\end{equation*}
for every $u\in D^{s,p}_{A,0}(\R^n,\C)$.
\end{remark}

\noindent
Next we state a second lemma completing the proof of Theorem~\ref{main} when combined with Lemma~\ref{liminf}.

\begin{lemma}[Limsup inequality]
	\label{limsup}
		Let $n\geq 1$, $p\in [1,\infty)$ and let
		$$
		u\in \bigcup_{0<s<1}D^{s,p}_{A,0}(\R^n,\C). 
		$$
	Then
\[
\limsup_{s\searrow 0} 
s\int_{\R^n}\int_{\R^n}\frac{|u(x)-e^{\i (x-y)\cdot A\left(\frac{x+y}{2}\right)}u(y)|^p}{|x-y|^{n+ps}}dxdy
\leq 	\frac{4\pi^{n/2}}{p\Gamma(n/2)}  \|u\|^p_{L^p(\R^n)}.
\]
\end{lemma}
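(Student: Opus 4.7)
The plan is to first establish the inequality for $u\in C_c^\infty(\R^n,\C)$ and then extend to the full space by density. I would fix such a $u$ with $\supp u\subset B_{R-\eta}$ for some $R,\eta>0$, set $\Phi(x,y):=(x-y)\cdot A\bigl(\tfrac{x+y}{2}\bigr)$, and exploit that $u$ vanishes off $B_R$: the contribution of $B_R^c\times B_R^c$ to the Gagliardo integral is zero and, by symmetry in $(x,y)$, I can decompose
$$
s\int_{\R^n}\!\int_{\R^n}\frac{|u(x)-e^{\i\Phi}u(y)|^p}{|x-y|^{n+ps}}\,dx\,dy \;=\; sJ_1 + 2sJ_2,
$$
where $J_1$ is the same integral restricted to $B_R\times B_R$ and
$$
J_2 \;=\; \int_{B_R}|u(x)|^p\int_{B_R^c}\frac{dy}{|x-y|^{n+ps}}\,dx.
$$
The magnetic phase in $J_2$ is irrelevant because $u(y)=0$ for $y\in B_R^c$.

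For the dominant term $2sJ_2$, the idea is that $\supp u$ sits at distance at least $\eta$ from $\partial B_R$, so a polar-coordinate computation yields, for every $x\in \supp u$,
$$
\frac{2\pi^{n/2}}{p\Gamma(n/2)}(2R)^{-ps} \;\le\; s\int_{B_R^c}\frac{dy}{|x-y|^{n+ps}} \;\le\; \frac{2\pi^{n/2}}{p\Gamma(n/2)}\eta^{-ps}.
$$
Both bounds converge to $\tfrac{2\pi^{n/2}}{p\Gamma(n/2)}$ as $s\searrow 0$, and the upper bound provides a uniform majorant on $\supp u$, so dominated convergence will give $2sJ_2\to \tfrac{4\pi^{n/2}}{p\Gamma(n/2)}\|u\|_{L^p(\R^n)}^p$, matching exactly the target right-hand side.

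The next step is to show $sJ_1\to 0$. Starting from
$$
|u(x)-e^{\i\Phi}u(y)|^p \;\le\; 2^{p-1}\bigl(|u(x)-u(y)|^p + |u(y)|^p|1-e^{\i\Phi}|^p\bigr),
$$
I would combine the Lipschitz estimate $|u(x)-u(y)|\le L|x-y|$ (valid since $u\in C_c^\infty$) with the elementary bound $|1-e^{\i\Phi}|\le|\Phi|\le\|A\|_{L^\infty(B_R)}|x-y|$, so that both pieces of $J_1$ get dominated by a constant multiple of
$$
\int_{B_R}\!\int_{B_R}|x-y|^{p(1-s)-n}\,dx\,dy.
$$
This double integral stays uniformly bounded for $s\in(0,1)$ because the singularity at $x=y$ is integrable whenever $p(1-s)>0$; the overall factor $s$ then forces $sJ_1\to 0$.

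Finally, to pass from $C_c^\infty$ to general $u\in\bigcup_{s\in(0,1)}D^{s,p}_{A,0}(\R^n,\C)$, I would approximate $u$ by $u_k\in C_c^\infty$ converging in the $D^{s_0,p}_{A,0}$-norm for some $s_0\in(0,1)$, and combine the quasi-triangle inequality with the uniform interpolation
$$
s\|v\|_{D^{s,p}_{A,0}}^p \;\le\; s_0\|v\|_{D^{s_0,p}_{A,0}}^p + C_{n,p}\|v\|_{L^p(\R^n)}^p, \qquad 0<s<s_0,
$$
obtained by splitting the integration region into $|x-y|<1$ and $|x-y|\ge 1$. The hard part will be the density passage itself: to use the above bound on $v=u-u_k$ one needs $\|u-u_k\|_{L^p(\R^n)}\to 0$, which calls for invoking the magnetic Sobolev embedding (valid when $ps_0<n$) or arranging $L^p$-convergence directly in the approximation scheme.
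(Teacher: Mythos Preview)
Your argument for $u\in C_c^\infty(\R^n,\C)$ is correct and in fact cleaner than what the paper does: the decomposition into $J_1$ and $2J_2$, the polar-coordinate sandwich for $J_2$, and the Lipschitz bound for $J_1$ all go through exactly as you describe. This part even yields the full limit, not just the limsup.

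The gap is precisely where you flag it: the density passage. You need to approximate $u\in D^{s_0,p}_{A,0}\cap L^p$ by $u_k\in C_c^\infty$ \emph{simultaneously} in the $D^{s_0,p}_{A,0}$-seminorm and in $L^p$, because both $\|u_k\|_{L^p}\to\|u\|_{L^p}$ (to recover the right-hand side) and $\|u-u_k\|_{L^p}\to 0$ (to kill the error via your interpolation bound) are required. By definition $D^{s_0,p}_{A,0}$ is only the completion in the seminorm, so the existence of such a joint approximation is not automatic and has to be proved --- it amounts to density of $C_c^\infty$ in $D^{s_0,p}_{A,0}\cap L^p$ for the sum norm, which calls for truncation/mollification together with fractional Leibniz-type estimates on the magnetic seminorm. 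Your proposed fix via the Sobolev embedding does not help, since it only gives control in $L^{np/(n-s_0p)}$, not $L^p$; and ``arranging $L^p$-convergence directly'' is exactly the nontrivial lemma that is missing. Note also that when $ps_0\ge n$ neither Sobolev nor Hardy is available at the level $s_0$, so any patch would have to first descend to a smaller exponent.

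The paper avoids this difficulty entirely by working directly with $u$: it decomposes $\R^n\times\R^n$ according to the relative sizes of $|x|$, $|y|$ and $|x-y|$, uses the magnetic Hardy inequality to control $\int |u(x)|^p|x|^{-sp}\,dx$ uniformly as $s\searrow 0$, and handles the near-diagonal region $\{|x|\le|y|\le 2|x|\}$ by comparing $|x-y|^{-n-sp}$ with $|x-y|^{-n-\tau p}$ on $\{|x-y|\le N\}$ (using finiteness of the $D^{\tau,p}_{A,0}$-seminorm) and a tail estimate on $\{|x-y|>N\}$. No approximation by smooth functions is needed. If you want to salvage your route, the cleanest option is to prove the joint density lemma; alternatively, you could try to replace the $\|v\|_{L^p}^p$ term in your interpolation inequality by the Hardy-weighted quantity $\int|v|^p|x|^{-s_0p}\,dx$, which \emph{is} controlled by $\|v\|_{D^{s_0,p}_{A,0}}$ when $n>ps_0$.
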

\begin{proof}
If $u\not\in L^p(\R^n),$ there is nothing to prove. Hence, we may assume that $u\in L^p(\R^n)$. 
%Since $\delta$ can be chosen arbitrarily small, inequality \eqref{} implies 
%\begin{equation}\label{sticaz}
%\liminf_{s\to 0} s\|u\|_{W^{s,p}_0(\R^n)}^p\geq 2p^{-1} |S^{n-1}| \|u\|_{L^p(\R^n)}^p.
%\end{equation}
%By \eqref{sticaz}, it suffices to assume that $u\in L^p(\R^n)$. 
We observe that 
\begin{align*}
& s\int_{\R^n}\int_{\R^n}\frac{|u(x)-e^{\i (x-y)\cdot A\left(\frac{x+y}{2}\right)}u(y)|^p}{|x-y|^{n+ps}}dxdy  \\ &=s\int_{\R^n}\int_{\{|x|\leq |y| \leq 2|x|\}}\frac{|u(x)-e^{\i (x-y)\cdot A\left(\frac{x+y}{2}\right)}u(y)|^p}{|x-y|^{n+sp}} dxdy  \notag\\
\nonumber
&+s\int_{\R^n}\int_{\{|y|\geq 2|x|\}}\frac{|u(x)-e^{\i (x-y)\cdot A\left(\frac{x+y}{2}\right)}u(y)|^p}{|x-y|^{n+sp}} dxdy \\
\nonumber
&+s\int_{\R^n}\int_{\{|x|\geq |y|\}}\frac{|u(x)-e^{\i (x-y)\cdot A\left(\frac{x+y}{2}\right)}u(y)|^p}{|x-y|^{n+sp}} dxdy\\
\nonumber
&= 2s\int_{\R^n}\int_{\{|x|\leq |y|\leq 2|x|\}}\frac{|u(x)-e^{\i (x-y)\cdot A\left(\frac{x+y}{2}\right)}u(y)|^p}{|x-y|^{n+sp}} dxdy \\
\nonumber
&+2s\int_{\R^n}\int_{\{|y|\geq 2|x|\}}\frac{|u(x)-e^{\i (x-y)\cdot A\left(\frac{x+y}{2}\right)}u(y)|^p}{|x-y|^{n+sp}} dxdy,
\end{align*}
where the last equality follows noticing that since $|e^{\i (x-y)\cdot A\left(\frac{x+y}{2}\right)}|=1$ then
\begin{align*}
\int_{\R^n}\int_{\{|x|\geq |y|\}}\frac{|u(x)-e^{\i (x-y)\cdot A\left(\frac{x+y}{2}\right)}u(y)|^p}{|x-y|^{n+sp}} dxdy&=\int_{\R^n}\int_{\{|y|\geq |x|\}}\frac{|u(x)-e^{\i (x-y)\cdot A\left(\frac{x+y}{2}\right)}u(y)|^p}{|x-y|^{n+sp}} dxdy\\
&=\int_{\R^n}\int_{\{|y|\geq 2|x|\}}\frac{|u(x)-e^{\i (x-y)\cdot A\left(\frac{x+y}{2}\right)}u(y)|^p}{|x-y|^{n+sp}} dxdy \\
&+\int_{\R^n}\int_{\{|x|\leq |y|\leq 2|x|\}}\frac{|u(x)-e^{\i (x-y)\cdot A\left(\frac{x+y}{2}\right)}u(y)|^p}{|x-y|^{n+sp}} dxdy.
\end{align*}
Using the triangle inequality for the $L^p$-norm on $\R^{2n}$ and
recalling that $|e^{\i (x-y)\cdot A\left(\frac{x+y}{2}\right)}|=1,$ yields
\begin{align*}
&s\int_{\R^n}\int_{\{|y|\geq 2|x|\}}\frac{|u(x)-e^{\i (x-y)\cdot A\left(\frac{x+y}{2}\right)}u(y)|^p}{|x-y|^{n+sp}} dxdy & \\
%&\leq 2^{p-1} \left(s\int_{\R^n}\int_{\{|y|\geq 2|x|\}}\frac{|u(x)|^p}{|x-y|^{n+sp}} dxdy+ s\int_{\R^n}\int_{\{|y|\geq 2|x|\}}\frac{|u(y)|^p}{|x-y|^{n+sp}}dxdy\right)\\
\nonumber
&\leq \left\{\left(s\int_{\R^n}\int_{\{|y|\geq 2|x|\}}\frac{|u(x)|^p}{|x-y|^{n+sp}} dxdy\right)^{1/p}+ \left(s\int_{\R^n}\int_{\{|y|\geq 2|x|\}}\frac{|u(y)|^p}{|x-y|^{n+sp}}dxdy\right)^{1/p}\right\}^p.
\end{align*}
We claim that 
\begin{align*}
\lim_{s\searrow 0} s\int_{\R^n}\int_{\{|y|\geq 2|x|\}}\frac{|u(y)|^p}{|x-y|^{n+sp}}dxdy=0.
\end{align*}
Observe that $2|x-y|\geq |y|+ (|y|-2|x|)$.
Then, if $|y|\geq 2|x|$ we get $2|x-y|\geq |y|$. Now, if ${\mathscr H}^{n-1}$ denotes the $(n-1)$-dimensional
Haudorff measure, it follows that 
\begin{align*}
 s^{1/p}\left(\int_{\R^n}\int_{\{|y|\geq 2|x|\}}\frac{|u(y)|^p}{|x-y|^{n+sp}}dx dy\right)^{1/p} &\leq s^{1/p}\left(2^{n+sp}\int_{\R^n}\frac{|u(y)|^p}{|y|^{n+sp}}\Big(\int_{\{|x|\leq |y|/2\}} dx\Big)dy\right)^{1/p}\\
&=2^s\left(\frac{s}{n} {\mathscr H}^{n-1}({\mathbb S}^{n-1})\right)^{1/p} \left(\int_{\R^n} \frac{|u(y)|^p}{|y|^{sp}}dy\right)^{1/p},
\end{align*}
and the last term goes to zero as $s\searrow 0$. Notice that $y\mapsto |y|^{-s} u(y)$ remains bounded 
in $L^p(\R^n)$ as $s\searrow 0$ by the argument indicated here below. Observe now that, 
if $|y|\geq 2|x|$ we then get $|x-y|\geq |x|$ yielding
\begin{align*}
&\left(s\int_{\R^n}\int_{\{|y|\geq 2|x|\}}\frac{|u(x)|^p}{|x-y|^{n+sp}} dxdy\right)^{1/p}\leq 
\left(s\int_{\R^n}\int_{\{|x-y|\geq |x|\}}\frac{|u(x)|^p}{|x-y|^{n+sp}} dxdy\right)^{1/p}\\
&=\left(s\int_{\R^n}|u(x)|^p\int_{B(0,|x|)^c}\frac{dz}{|z|^{n+sp}} dx\right)^{1/p}=
\frac{{\mathscr H}^{n-1}({\mathbb S}^{n-1})^{1/p}}{p^{1/p}} \left(\int_{\R^n} \frac{|u(x)|^p}{|x|^{sp}}dx\right)^{1/p}.
\end{align*}
Moreover $|x|^{-sp}|u(x)|^p=f_s(x)+g_s(x)$, where 
$$
f_s(x):=\frac{|u(x)|^p}{|x|^{sp}}{\bf 1}_{B(0,1)}(x),\qquad
g_s(x):=\frac{|u(x)|^p}{|x|^{sp}}{\bf 1}_{B(0,1)^c}(x)\leq|u(x)|^p{\bf 1}_{B(0,1)^c}(x)\in L^1(\R^n),
$$
and $s\mapsto f_s$ is decreasing and, moreover, by the Hardy inequality \eqref{hardym}
and the assumption on $u$, it follows that $f_{\tilde s}\in L^1(\R^n)$ for some $\tilde s\in (0,1)$. Hence, by monotone 
and dominated convergence, we conclude that
\begin{equation*}
\limsup_{s\searrow 0} s\int_{\R^n}\int_{\{|y|\geq 2|x|\}}\frac{|u(x)|^p}{|x-y|^{n+sp}} dxdy\leq \frac{{\mathscr H}^{n-1}({\mathbb S}^{n-1})}{p}\|u\|_{L^p(\R^n)}^p=
\frac{2\pi^{n/2}}{p\Gamma(\frac{n}{2})}\|u\|_{L^p(\R^n)}^p.
\end{equation*}
Then, we conclude from the above inequalities that
\begin{equation}
\label{seconda}
\limsup_{s\searrow 0} 2s\int_{\R^n}\int_{\{|y|\geq 2|x|\}}\frac{|u(x)-e^{\i (x-y)\cdot A\left(\frac{x+y}{2}\right)}u(y)|^p}{|x-y|^{n+sp}} dxdy 
\leq \frac{4\pi^{n/2}}{p\Gamma(\frac{n}{2})}\|u\|_{L^p(\R^n)}^p.
\end{equation}
We claim that 
\begin{equation}
\label{claim}
\limsup_{s \searrow 0} 2s  \int_{\R^n}\int_{\{|x|\leq |y|\leq 2|x|\}} \dfrac{|u(x)-e^{\i (x-y)\cdot A\left(\frac{x+y}{2}\right)}u(y)|^p}{|x-y|^{n+sp}}\, dxdy = 0.
\end{equation}
By assumption let $\tau \in (0,1)$ such that $u \in D^{\tau,p}_{A,0}(\R^n)$. 
Now let $N \geq 1$ and $s<\tau$. Then
\begin{equation*}
\begin{aligned}
2s \, &\int_{\R^n} \int_{\{|x|\leq |y|\leq 2|x|\}}\dfrac{|u(x)-e^{\i (x-y)\cdot A\left(\frac{x+y}{2}\right)}u(y)|^p}{|x-y|^{n+sp}}\, dxdy \\
&=2s \, \int_{\R^n} \int_{\underset{\{|x|\leq |y|\leq2|x|\}}{\{|x-y|\leq N\}}}\dfrac{|u(x)-e^{\i (x-y)\cdot A\left(\frac{x+y}{2}\right)} u(y)|^p}{|x-y|^{n+sp}}\, dxdy  \\
&+ 2s \, \int_{\R^n} \int_{\underset{\{|x|\leq|y|\leq 2|x|\}}{\{|x-y|> N\}}}\dfrac{|u(x)-e^{\i (x-y)\cdot A\left(\frac{x+y}{2}\right)} u(y)|^p}{|x-y|^{n+sp}}\, dxdy
=: \mathcal{I} + \mathcal{II}.
\end{aligned}
\end{equation*}
Let us consider $\mathcal{I}$ first. Since $|x-y| \leq N$, it holds that
$$
\dfrac{1}{|x-y|^{n+sp}} = \dfrac{|x-y|^{p(\tau -s)}}{|x-y|^{n+\tau p}} \leq \dfrac{N^{p(\tau -s)}}{|x-y|^{n+\tau p}}.
$$
Therefore $\mathcal{I}$ goes to zero as $s\searrow$, since
\begin{equation*}
\mathcal{I} \leq 2s N^{p(\tau -s)}\, \int_{\R^n} \int_{\underset{\{|x|\leq |y|\leq 2|x|\}}{\{|x-y|\leq N\}}}\dfrac{|u(x)-e^{\i (x-y)\cdot A\left(\frac{x+y}{2}\right)}u(y)|^p}{|x-y|^{n+\tau p}}\,dx dy.
\end{equation*}
Let us now move to $\mathcal{II}$. Since $|u(x)-e^{\i (x-y)\cdot A\left(\frac{x+y}{2}\right)}u(y)|^{p} \leq 2^{p-1} \left(|u(x)|^{p} + |u(y)|^{p}\right)$, we get
\begin{equation*}
\mathcal{II} \leq 2^p s  \int_{\R^n}\!\int_{\underset{\{ |x|\leq |y|\leq 2|x|\}}{\{|x-y|\geq N\}}} \!\dfrac{|u(x)|^{p}}{|x-y|^{n+sp}} dxdy +
 2^p s \int_{\R^n}\!\int_{\underset{\{ |x|\leq |y|\leq 2|x|\}}{\{|x-y|\geq N\}}} \!\dfrac{|u(y)|^{p}}{|x-y|^{n+sp}} dxdy
=: \mathcal{II'} + \mathcal{II''}.
\end{equation*}
Regarding $\mathcal{II'}$, since $|x-y|\geq N$ and $|y|\leq 2|x|$, it holds 
$$
N \leq |x-y| \leq |x| + |y| \leq 3|x|,
$$
which implies that $|x| \geq \tfrac{N}{3}$. In particular, this also implies that
$$
\mathcal{II'} \leq 2^p s \int_{\{|x| \geq N/3\}}\left(\int_{\{|x-y|\geq N\}} \dfrac{|u(x)|^{p}}{|x-y|^{n+sp}}dy\right) dx\leq C(n,p) 
\int_{\{|x| \geq N/3\}} |u(x)|^{p} \, dx.
$$
For $\mathcal{II''}$, since as before $|x-y|\geq N$ and $|x|\leq |y|$, we have 
$$
N \leq |x-y| \leq |x| + |y| \leq 2|y|,
$$
which implies $|y| \geq \dfrac{N}{2} \geq \dfrac{N}{3}.$ Therefore, we get
\begin{equation*}
\mathcal{II''}  \leq 2^ps \int_{\{|y|\geq N/3\}}|u(y)|^p\left(\int_{\{|z|\geq N\}} \dfrac{1}{|z|^{n+sp}}dz\right) dy\leq
C(n,p) \, \int_{\left\{|y| \geq N/3 \right\}} |u(y)|^{p} \, dy.
%&= 2^ps  \int_{\{|x|\geq N/3\}}\int_{\{|x-y|\geq N\}} \dfrac{|u(x)|^{p}}{|x-y|^{n+sp}}dx dy.
\end{equation*}
Combining the estimates for $\mathcal{II'}$ and $\mathcal{II''}$, we get 
$$
\mathcal{II} \leq C(n,p) \, \int_{\left\{|x| \geq N/3 \right\}} |u(x)|^{p}  dx,
$$
which is a bound independent of $s$. Now, going back to 
$$
\limsup_{s \searrow 0} 2s \int_{\R^n} \int_{\{|x|<|y|<2|x|\}}\dfrac{|u(x)-e^{\i (x-y)\cdot A\left(\frac{x+y}{2}\right)}u(y)|^p}{|x-y|^{n+sp}}\,dxdy
\leq 2C(n,p)\|u\|^p_{L^p(B(0,N/3)^c)},
$$
and \eqref{claim} follows letting $N\to\infty$, since $u\in L^p({\mathbb R}^n)$. 
Collecting \eqref{seconda} and \eqref{claim}, the assertion follows. 
\end{proof}

%\medskip
\bigskip

\end{document}